\documentclass[12pt,reqno,a4paper]{amsart}
\usepackage{amssymb}
\usepackage{amsmath}
\usepackage{amsthm}
\usepackage{lineno}
\parskip1mm
\addtolength{\hoffset}{-1cm}
\parindent 0.2in
\textwidth=460pt \evensidemargin=8pt \oddsidemargin=8pt
\marginparsep=8pt \marginparpush=15pt \textheight=640pt
\topmargin=8pt

\makeatletter
\@namedef{subjclassname@2010}{%
  \textup{2010} Mathematics Subject Classification}
\makeatother
\numberwithin{equation}{section}

\newtheorem{lemma}{Lemma}[section]
\newtheorem{theorem}{Theorem}[section]

\title[Applications of a new P-Q modular equation of degree two]
{Applications of a new P-Q modular equation of degree two}
\author[D. J. Prabhakaran]{D. J. Prabhakaran}
\address{Department of Mathematics \\
Anna University, MIT Campus\\
Chennai- 600025\\ India}
\email{asirprabha@gmail.com}
\author[K. Ranjith kumar]{K. Ranjith kumar}
\address{Department of Mathematics \\
Anna University, MIT Campus\\
Chennai- 600025\\ India}
\email{ranjithkrkkumar@gmail.com}
\begin{document}
\begin{abstract}
At scattered places in his first notebook, Ramanujan recorded the values for 107 class invariants or irreducible monic polynomials satisfied by them. On pages 294-299 in his second notebook, he gave a table of values for 77 class invariants $G_n$ and $g_n$ in his second notebook. Traditionally, $G_n$ is determined for odd values of $n$ and $g_n$ for even values of $n$. On pages 338 and 339 in his first notebook, Ramanujan defined the remarkable product of theta-functions $a_{m, n}$. Also, he recorded eighteen explicit values depending on two parameters, namely, $m$, and $n$, where these are odd integers. In this paper, we initiate to study explicit evaluations of $G_n$ for even values of $n$. We establish a new general formula for the explicit evaluations of $G_n$ involving class invariant $g_n$. For this purpose, we derive a new P-Q modular equation of degree two. Further application of this modular equation, we establish a new formula to explicit evaluation of $a_{m, 2}$. Also, we compute several explicit values of class invariant $g_{n}$ and singular moduli $\alpha_n$.
\end{abstract}
\keywords{Modular equation, theta functions, class invariants.}
\maketitle
\section{Introduction}
The following definitions of theta functions \cite{Berndt-notebook-3} $\varphi$, $\psi,$ and $f$ with $|q|<1$ are classical:
\begin{eqnarray*}
\varphi(q) &=& f(q,q)=\sum^{\infty}_{n=-\infty}q^{n^2}=(-q;q^2)^2_{\infty}(q^2;q^2)_{\infty},\\
\psi(q) &=& f(q,q^3)=\sum^{\infty}_{n=0}q^{\frac{n(n+1)}{2}}=\frac{(q^2;q^2)_{\infty}}{(q;q^2)^2_{\infty}},\\
f(-q)&=&f(-q,-q^2)=\sum^{\infty}_{n=-\infty}(-1)^nq^{\frac{n(3n-1)}{2}}=(q;q)_{\infty},
\end{eqnarray*}
where, $\displaystyle (a;q)_{\infty}=\prod^{\infty}_{n=0}\left(1-aq^n\right)$.\\
For $q=e^{-\pi\sqrt{n}}$, Weber-Ramanujan class invariants \cite[p.183, (1.3)]{Berndt-notebook-5} are defined by
\begin{align}\label{ejhq1}
G_n = 2^{-1/4}q^{-1/24} \chi(q)
\quad ; \quad
g_n = 2^{-1/4}q^{-1/24} \chi(-q),
\end{align}
where, $n$ is a positive rational number and $\chi(q)=(-q;q^2)_\infty$. Ramanujan \cite[Entry 2.1, p.187]{Berndt-notebook-5} recorded simple formula relating these class invariants as follows:
\begin{eqnarray}\label{identy}
g_{4n} &=& 2^{1/4}g_nG_n.
\end{eqnarray}
Ramanujan evaluated a total of 116 class invariants \cite[p.189-204]{Berndt-notebook-5}. Traditionally, $G_n$ is determined for odd values of $n$ and $g_n$ for even values of $n$.  These have been proved by various authors using techniques such as modular equations, Kronecker limit formula, and empirical process (established by Watson) \cite[Chapter 34]{Berndt-notebook-5}.\\
The ordinary or Gaussian hypergeometric function is defined by
\begin{eqnarray*}
{_2F_1}\left(a,b;c;z
\right)
&=&\sum_{n=0}^{\infty}\frac{(a)_n(b)_n}{(c)_n n!}z^n, \ \ \ \ |z|< 1
\end{eqnarray*}
where, $a, b, c$ are complex numbers such that $ c \neq 0, -1, -2, \ldots,$ and $(a)_0 = 1, \ (a)_n = a(a+1)(a+2)\ldots(a+n-1)$ for any positive integer $n$.

Now, we shall recall the definition of modular equation from \cite{Berndt-notebook-3}. The complete elliptic integral of the first kind $K(k)$ of modulus $k$ is defined by
\begin{align}\label{112235eq0}
K(k)=\int_{0}^{\frac{\pi}{2}}\frac{d\theta}{\sqrt{1-k^2\sin^2 \theta}}=\frac{\pi}{2}\sum^{\infty}_{n=0}\frac{\left(\frac{1}{2}\right)^2_n}{(n!)^2}k^{2n}=
\frac{\pi}{2}\varphi^2\left(e^{-\pi\frac{K'}{K}}\right),
\quad (0<k<1)
\end{align}
and let $K'=K(k'),$ where $k'=\sqrt{1-k^2}$ is represented as the complementary modulus of $k$. Let $K, K', L,$ and $L'$ denote the complete elliptic integrals of the first kind associated with the moduli $k, k', l,$ and $l'$ respectively. In case, the equality
\begin{equation} \label{eq00}
n\frac{K'}{K}=\frac{L'}{L}
\end{equation}
holds for a positive integer $n$, then a modular equation of degree $n$ is the relation between the moduli $k,$ and $l$, which is implied by equation (\ref{eq00}). Ramanujan defined his modular equation involving $\alpha,$ and $\beta$, where, $\alpha=k^2,$ and $\beta=l^2$. Then we say $\beta$ is of degree $n$ over $\alpha$.

Ramanujan recorded 23 $P$-$Q$ modular equations in terms of their theta function in his notebooks \cite{sr}. All those proved by Berndt et al. by employing the theory of theta functions and modular forms.

If, as usually quoted in the theory of elliptic functions, $k = k(q)$ denotes the modulus, then, the singular moduli $k_n$ is defined by $k_n = (e^{-\pi\sqrt{n}})$, where $n$ is a positive integer. In terms of Ramanujan, set $\alpha = k^2$ and $\alpha_n = k^2_n,$ he hypothesized the values of over 30 singular moduli in his notebooks. On page 82 of his first notebook, Ramanujan stated three additional theorems for calculating $\alpha_n$ for even values of $n$. Particularly, he offered formulae for $\alpha_{4p}$, $\alpha_{8p},$ and $\alpha_{16p}$. Moreover, he recorded several values of $\alpha_n$ for odd values of $n$ in his first and second notebook. All these results have proved by Berndt et al. by employing  Ramanujan's class invariants $G_n$ and $g_n$. Also we observed that representation for $\alpha_n$ in terms of theta function. This is given by
\cite[Entry 12 (i),(iv) Ch.17, p.124]{Berndt-notebook-3}
\begin{eqnarray}\label{e87q1}
\alpha_n &=& \left(\frac{f(q)}{2^{1/2}q^{1/8}f(-q^4)}\right)^{-8},
\end{eqnarray}
where, $\displaystyle q = e^{-\pi\sqrt{n}}.$

On page 338 in his first notebook \cite{sr}, Ramanujan defined
\begin{equation} \label{eq1}
 a_{m,n} =\frac{nq^{(n-1)/4}\psi^2\left(q^n\right)\varphi^2\left(-q^{2n}\right)}{\psi^2\left(q\right)\varphi^2\left(-q^2\right)}
\end{equation}
where, $\displaystyle q=e^{-\pi\sqrt{m/n}}$ and $m$, $n$ are positive rationals then, on page 338 and 339, he offered a list of 18 particular values. All those 18 values have proved by Berndt, Chan, and Zhang \cite{be5}. Recently, Prabhakaran, and Ranjith Kumar \cite{djpkrk} have established a new general formulae for the explicit evaluations of $a_{3m,3},$ and $a_{m,9}$ by using $P-Q$ mixed modular equations, and the values for certain class invariant of Ramanujan. Also they have calculated some new explicit values of $a_{3m,3}$ for $m = 2, 7, 13, 17, 25, 37,$ and $a_{m,9}$ for $m = 17, 37$.

Naika, and Dharmendra \cite{ms2} have given alternative form of \eqref{eq1} as follows:
\begin{equation} \label{dseq2}
a_{m,n}=\frac{nq^{(n-1)/4}\psi^2\left(-q^n\right)\varphi^2\left(q^{n}\right)}{\psi^2\left(-q\right)\varphi^2\left(q\right)}.
\end{equation}
They have proved some general theorems to calculate explicit values of $a_{m,n}$.

The organisation of the present study is as follows. In Section 2, we collect some identities which are useful in proofs of our main results. In Section 3, we derive a new $P-Q$ modular equation of degree two. Applying this modular equation, we establish new general formulae for
the explicit evaluations of class invariant $G_n$ for even values of $n,$ and the Ramanujan's remarkable product of theta functions $a_{m, 2}$ along with class invariant $g_n$. By using these formulae, we compute several explicit values of class invariant $G_n,$ and $a_{m, 2}$. Also, we evaluate several explicit values of class invariant $g_{n},$ and singular moduli $\alpha_n$. These are presented in Section 4.
\section{Preliminaries}
We list a few identities which are useful in establishing our main results.
\begin{lemma}\label{lem1}\cite[Entry 24(iii)  p. 39]{Berndt-notebook-3} We have
\begin{equation} \label{1lem}
  f(q)f(-q^2)= \psi(-q)\varphi(q).
\end{equation}
\end{lemma}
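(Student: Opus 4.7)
My plan is to reduce both sides of the identity to the same infinite product in standard $q$-Pochhammer form, using only the definitions in the introduction together with the classical Jacobi triple product representations
\[
\varphi(q) = (-q;q^2)^2_\infty (q^2;q^2)_\infty, \qquad \psi(q) = \frac{(q^2;q^2)_\infty}{(q;q^2)_\infty}.
\]
The whole argument is a short manipulation of Pochhammer symbols once the correct product forms are in hand.

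First I will handle the left-hand side. Since the introduction defines $f$ only at the argument $-q$, via $f(-q)=(q;q)_\infty$, I interpret this as the rule $f(x)=(-x;-x)_\infty$ applied to any admissible $x$. Hence $f(q)=(-q;-q)_\infty$, and splitting the product according to the parity of the exponent gives
\[
f(q) = \prod_{n \text{ odd}}(1+q^n)\prod_{n \text{ even}}(1-q^n) = (-q;q^2)_\infty (q^2;q^2)_\infty.
\]
Combining with $f(-q^2)=(q^2;q^2)_\infty$, I will obtain
\[
f(q)\,f(-q^2) = (-q;q^2)_\infty (q^2;q^2)^2_\infty.
\]

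For the right-hand side, substituting $q\mapsto -q$ in the product form of $\psi$ gives $\psi(-q)=(q^2;q^2)_\infty/(-q;q^2)_\infty$, and then multiplication by $\varphi(q) = (-q;q^2)^2_\infty (q^2;q^2)_\infty$ yields, after one $(-q;q^2)_\infty$ cancellation,
\[
\psi(-q)\,\varphi(q) = (-q;q^2)_\infty (q^2;q^2)^2_\infty,
\]
which matches the previous display.

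There is no real obstacle. The only point requiring care is the interpretation of the single-argument symbol $f(q)$ as $(-q;-q)_\infty$, which the introduction does not make fully explicit; once that convention is fixed, the proof is a one-line cancellation of $q$-Pochhammer factors.
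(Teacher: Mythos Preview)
The paper does not supply a proof of this lemma; it simply records the identity with a citation to Berndt. Your $q$-product verification is correct and entirely standard: the splitting $f(q)=(-q;q^2)_\infty(q^2;q^2)_\infty$ and the substitution $q\mapsto -q$ in $\psi$ give both sides equal to $(-q;q^2)_\infty(q^2;q^2)_\infty^2$. One minor remark: the product form of $\psi$ you use, $(q^2;q^2)_\infty/(q;q^2)_\infty$, is the correct one; the exponent $2$ on $(q;q^2)_\infty$ printed in the paper's introduction is a misprint, so your quoted formula does not literally match what appears there, but it is what was intended and your argument is sound.
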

\begin{lemma}\cite[Entry 12(i),(iii) Ch.17, p.124]{Berndt-notebook-3} We have
\begin{eqnarray}
f(q)&=&\sqrt{z} 2^{-1/6}\left(\alpha(1-\alpha)/q\right)^{1/24} \label{feq1},\\
f(-q^2)&=&\sqrt{z} 2^{-1/3}\left(\alpha(1-\alpha)/q\right)^{1/12}\label{feq2}.
\end{eqnarray}
\end{lemma}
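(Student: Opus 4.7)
My plan is to derive both formulas from the fundamental parameterization $\varphi(q)=\sqrt{z}$ of Entry~6, Chapter~17 in \cite{Berndt-notebook-3}, together with Jacobi product representations of the theta functions. The key intermediate identity is
$$\chi^{24}(q)=\frac{16q}{\alpha(1-\alpha)},\qquad\text{equivalently}\qquad \chi(q)=2^{1/6}\{q/(\alpha(1-\alpha))\}^{1/24},$$
after which \eqref{feq1} and \eqref{feq2} each reduce to a single substitution.

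First I would express $\alpha$ and $1-\alpha$ in terms of $\chi(q)$ and $\chi(-q)$. Splitting the Jacobi triple product for $\varphi$ gives $\varphi(q)=(q^2;q^2)_\infty\,\chi^2(q)$ and $\varphi(-q)=(q^2;q^2)_\infty\,\chi^2(-q)$. Substituting these, together with the product form of $\psi(q^2)$ and the elementary identity $(q^2;q^4)_\infty=\chi(q)\chi(-q)$, into the classical theta-function expressions $\alpha=16q\,\psi^4(q^2)/\varphi^4(q)$ and $1-\alpha=\varphi^4(-q)/\varphi^4(q)$ yields, after simplification,
$$\alpha=\frac{16q}{\chi^8(-q)\,\chi^{16}(q)},\qquad 1-\alpha=\frac{\chi^8(-q)}{\chi^8(q)}.$$
Multiplying these two relations cancels $\chi(-q)$ and leaves the displayed auxiliary identity for $\chi(q)$.

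With $\chi(q)$ in hand, \eqref{feq2} follows directly from the Jacobi product $\varphi(q)=\chi^2(q)\,f(-q^2)$: solving gives $f(-q^2)=\sqrt{z}/\chi^2(q)=\sqrt{z}\cdot 2^{-1/3}\{\alpha(1-\alpha)/q\}^{1/12}$. For \eqref{feq1}, separating the factors of $(-q;-q)_\infty=\prod_{n\ge 1}(1-(-q)^n)$ by the parity of $n$ gives the product identity
$$f(q)=(-q;-q)_\infty=(q^2;q^2)_\infty\,\chi(q)=\chi(q)\,f(-q^2).$$
Combining with \eqref{feq2} and collecting the fractional exponents of $2$ and $\alpha(1-\alpha)/q$ yields $f(q)=\sqrt{z}\cdot 2^{-1/6}\{\alpha(1-\alpha)/q\}^{1/24}$.

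The main technical obstacle is the exponent bookkeeping: every $24$th root has to be carried consistently through the product manipulations, and the correct step of combining the formulas for $\alpha$ and $1-\alpha$ multiplicatively, rather than taking a ratio, is what eliminates $\chi(-q)$ and produces a formula for $\chi(q)$ alone. Once $\chi(q)$ is secured, both \eqref{feq1} and \eqref{feq2} are essentially one substitution away.
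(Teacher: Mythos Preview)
Your derivation is correct: the formulas for $\alpha$ and $1-\alpha$ in terms of $\chi(q)$ and $\chi(-q)$ are right, their product indeed gives $\chi^{24}(q)=16q/(\alpha(1-\alpha))$, and the factorizations $\varphi(q)=\chi^2(q)f(-q^2)$ and $f(q)=\chi(q)f(-q^2)$ then yield \eqref{feq2} and \eqref{feq1} exactly as you describe.

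There is nothing to compare against, however: the paper does not prove this lemma. It is quoted verbatim from Berndt \cite[Entry 12(i),(iii), Ch.~17, p.~124]{Berndt-notebook-3} as a preliminary result and used without further justification. Your argument is essentially the standard one that Berndt himself gives in that reference (deriving the $\chi(q)$ parameterization from the theta-quotient expressions for $\alpha$ and $1-\alpha$, then reading off the evaluations of $f(q)$ and $f(-q^2)$), so you have reconstructed the source proof rather than offered an alternative route.
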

\begin{lemma}\cite[Entry 24(ii), p. 214]{Berndt-notebook-3} If $\beta$ is of degree 2 over $\alpha$, then,
\begin{eqnarray}\label{e22qn1}
m\sqrt{1-\alpha}+\sqrt{\beta} &=& 1, \\ \label{e2q2n1}
m^2\sqrt{1-\alpha}+\beta &=& 1,
\end{eqnarray}
where, $m = z_1/z_2 = \varphi^2(q)/\varphi^2(q^2)$.
\end{lemma}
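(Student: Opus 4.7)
The plan is to translate both identities into theta-function statements via the dictionary of Entry~11 of Chapter~17 in Berndt's notebook III, namely $z_1=\varphi^2(q)$, $\varphi^2(-q)=z_1\sqrt{1-\alpha}$, and $\varphi^2(-q^2)=z_2\sqrt{1-\beta}$, where the hypothesis ``$\beta$ of degree~$2$ over $\alpha$'' identifies the nome associated with $\beta$ as $q^2$, so that $z_2=\varphi^2(q^2)$. The first task is to pin down the multiplier $m$. Dividing the standard bisection identity $\varphi^2(q)+\varphi^2(-q)=2\varphi^2(q^2)$ by $\varphi^2(q^2)=z_2$ yields $m+m\sqrt{1-\alpha}=2$, whence $m=2/(1+\sqrt{1-\alpha})$.

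For \eqref{e22qn1}, the additional ingredient needed is the classical descending Landen relation
\[
\sqrt{\beta}=\frac{1-\sqrt{1-\alpha}}{1+\sqrt{1-\alpha}},
\]
which is the content of the degree-$2$ modular equation at the modulus level. Combined with the formula for $m$, direct addition gives
\[
m\sqrt{1-\alpha}+\sqrt{\beta}=\frac{2\sqrt{1-\alpha}+(1-\sqrt{1-\alpha})}{1+\sqrt{1-\alpha}}=1.
\]

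For \eqref{e2q2n1}, the cleaner route bypasses Landen entirely. The Jacobi product identity $\varphi(q)\varphi(-q)=\varphi^2(-q^2)$, squared and divided by $\varphi^4(q^2)$, gives
\[
m^2\sqrt{1-\alpha}=\frac{\varphi^2(q)\,\varphi^2(-q)}{\varphi^4(q^2)}=\frac{\varphi^4(-q^2)}{\varphi^4(q^2)}=1-\beta,
\]
which rearranges to \eqref{e2q2n1}.

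The main obstacle is the Landen degree-$2$ relation used in the proof of \eqref{e22qn1}: it has to be established independently, historically via either Gauss's AGM iteration or the quadratic transformation of ${}_2F_1\!\left(\tfrac12,\tfrac12;1;\cdot\right)$. Once that ingredient is available, both identities reduce to elementary algebra built on the bisection and product formulae for $\varphi$; it is worth noting, as the derivation of \eqref{e2q2n1} shows, that the second identity is in fact a direct consequence of the Jacobi product identity and does not require Landen at all.
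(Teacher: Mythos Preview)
Your argument is correct. The bisection identity $\varphi^2(q)+\varphi^2(-q)=2\varphi^2(q^2)$ together with the parametrisation $\varphi^2(-q)=z_1\sqrt{1-\alpha}$ does give $m(1+\sqrt{1-\alpha})=2$; the Landen relation $\sqrt{\beta}=(1-\sqrt{1-\alpha})/(1+\sqrt{1-\alpha})$ then yields \eqref{e22qn1} as you compute, and the product identity $\varphi(q)\varphi(-q)=\varphi^2(-q^2)$ gives \eqref{e2q2n1} directly. One small remark: \eqref{e22qn1} can in fact be obtained without invoking Landen separately, since squaring \eqref{e22qn1} and subtracting \eqref{e2q2n1} shows the two are equivalent once $m(1+\sqrt{1-\alpha})=2$ is known; conversely, your derivation of $m$ together with \eqref{e2q2n1} already \emph{proves} Landen's relation, so the latter is not an independent ingredient but a consequence.

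As to comparison with the paper: there is nothing to compare. The paper does not prove this lemma at all; it is quoted verbatim from Berndt \cite[Entry 24(ii), p.~214]{Berndt-notebook-3} as a preliminary result and used as a black box in the proof of Theorem~3.1. Your write-up therefore supplies a self-contained justification that the paper deliberately omits.
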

\begin{lemma}\cite[Theorem 3.5.1]{Yi-Thesis-2000} \label{thkhe2}
If  $P=\displaystyle \frac{f(-q)}{q^{1/8}f(-q^{4})}$  and  $\displaystyle Q = \frac{f(-q^2)}{q^{1/4}f(-q^{8})}$, then,
\begin{eqnarray}
\left(PQ\right)^4+\left(\frac{4}{PQ}\right)^4&=& \left(\frac{Q}{P}\right)^{12}-16\left(\frac{P}{Q}\right)^4-16\left(\frac{Q}{P}\right)^4.\label{ttthe2}
\end{eqnarray}
\end{lemma}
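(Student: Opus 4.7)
The plan is to prove this degree-two $P$-$Q$ modular equation by passing to the classical elliptic parameterization, reducing (\ref{ttthe2}) to a polynomial identity in a single modulus, and verifying that identity directly. Introduce moduli $\alpha_1,\alpha_2,\alpha_3,\alpha_4$ with multipliers $z_1,z_2,z_3,z_4$ corresponding to the nomes $q,q^2,q^4,q^8$, so that each $\alpha_{j+1}$ has degree $2$ over $\alpha_j$ in the sense of (\ref{eq00}). Entry 12 of Chapter 17 of Berndt's third notebook (which includes formulas (\ref{feq1})--(\ref{feq2}) above, together with their companion expressions for $f(-q)$ and $f(-q^4)$) then represents each of $f(-q)$, $f(-q^2)$, $f(-q^4)$, $f(-q^8)$ as an explicit monomial in the $\alpha_j$, $(1-\alpha_j)$, $q$, and $\sqrt{z_j}$.

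Substituting these representations into $P=f(-q)/(q^{1/8}f(-q^4))$ and $Q=f(-q^2)/(q^{1/4}f(-q^8))$, the fractional powers of $q$ in the numerators and denominators combine with the prefactors $q^{1/8}$ and $q^{1/4}$ and cancel completely, so that $P$, $Q$, and hence every combination $(PQ)^4$, $(P/Q)^4$, $(Q/P)^{4}$, and $(Q/P)^{12}$ appearing in (\ref{ttthe2}) becomes an explicit monomial in $\{\alpha_j,\,1-\alpha_j,\,z_i/z_j\}$. To collapse everything to a single variable, I would then apply the degree-two modular equations (\ref{e22qn1})--(\ref{e2q2n1}) three times in succession, once at each transition $(\alpha_1,\alpha_2)$, $(\alpha_2,\alpha_3)$, $(\alpha_3,\alpha_4)$. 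At each level, the pair of relations determines $\alpha_{j+1}$ and the multiplier ratio $m_j=z_j/z_{j+1}$ as explicit algebraic functions of $\alpha_j$; iterating expresses $P$ and $Q$ as closed-form functions of $\alpha:=\alpha_1$.

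Once this reduction is in hand, (\ref{ttthe2}) becomes, after clearing denominators and eliminating the nested square roots $\sqrt{1-\alpha_j}$ by repeated squaring, a polynomial identity in $\alpha$ (equivalently, in $k=\sqrt{\alpha}$) that is in principle routine to verify. The principal obstacle will be the algebraic heft of this final step: pairing the twelfth power $(Q/P)^{12}$ against the fourth powers on the left, through three successive layers of nested radicals, yields a polynomial identity of substantial degree. To keep this manageable I would first derive cleaner intermediate expressions for $(PQ)^4+(4/PQ)^4$ and for $(Q/P)^4$ individually as rational functions of $\alpha$---using (\ref{1lem}) to simplify products of the form $f(q)f(-q^2)$ wherever they arise---and only at the end combine them to match the right-hand side. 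Should the manual algebra prove too intricate, the final polynomial identity can be confirmed with a computer algebra system, mirroring the derivation in Yi's thesis cited in the statement.
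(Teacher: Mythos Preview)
The paper does not prove this lemma at all: it is listed in Section~2 (Preliminaries) as a known result quoted from Yi's thesis \cite[Theorem 3.5.1]{Yi-Thesis-2000}, with no argument supplied. So there is no ``paper's own proof'' to compare against.

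That said, your strategy is sound and in fact mirrors the method the paper uses to establish its \emph{new} $P$-$Q$ modular equation in Theorem~3.1: transcribe $P$ and $Q$ via the Entry~12 representations (\ref{feq1})--(\ref{feq2}), invoke the degree-two relations (\ref{e22qn1})--(\ref{e2q2n1}) to eliminate the higher moduli and multipliers, and reduce to a polynomial identity. The only caveat is scale: here you have a chain of three degree-two steps ($q\to q^2\to q^4\to q^8$) rather than one, so the nested radicals multiply and the final polynomial verification is genuinely heavy. Your plan to first isolate $(PQ)^4+(4/PQ)^4$ and $(Q/P)^4$ separately before combining is the right instinct. A slightly cleaner route, closer in spirit to how such identities are often derived, is to parameterize everything by the \emph{middle} modulus (say $\alpha_2$ or $\alpha_3$) and work outward in both directions, which tends to keep the intermediate expressions more symmetric; but either way the identity is ultimately mechanical once the parameterization is in place.
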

\section{General formulae for the explicit evaluations of $G_{2n}$, $G_{n/2}$, and $a_{m, 2}$}
In this section, we derive a new P-Q modular equation of degree two. As application of this modular equation, we establish some general formulae for the explicit evaluations of $G_{2n}$, $G_{n/2}$, and $a_{m, 2}$ in term of the class invariant $g_n$.
\begin{theorem}
If  $P=\displaystyle \frac{f(q)}{q^{1/24}f(q^{2})}$  and  $\displaystyle Q = \frac{f(-q^2)}{q^{1/12}f(-q^{4})}$, then,
\begin{eqnarray}\label{the1}
Q^{16}-P^4Q^{14}+8P^4Q^2-4P^8&=&0.
\end{eqnarray}
\end{theorem}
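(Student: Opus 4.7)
The plan is to parameterize everything in terms of the Ramanujan moduli $\alpha$, $\beta$ and the multiplier $m$, using Lemma 2.2 to eliminate the $f$-functions and Lemma 2.3 to link $\alpha$ to $\beta$.

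First, I would apply Lemma 2.2 to $f(q)$ and $f(-q^2)$ with the pair $(z_1,\alpha)$ attached to $q$, and apply the same formulas with $q\mapsto q^2$ (so that the corresponding pair becomes $(z_2,\beta)$ with $\beta$ of degree $2$ over $\alpha$) to handle $f(q^2)$ and $f(-q^4)$. Forming the ratios that define $P$ and $Q$, the fractional powers of $q$ and the $2^{-1/6}$, $2^{-1/3}$ constants cancel exactly, leaving
\[
P^{24}=m^{12}\,\frac{\alpha(1-\alpha)}{\beta(1-\beta)},\qquad
Q^{12}=m^{6}\,\frac{\alpha(1-\alpha)}{\beta(1-\beta)},
\]
where $m=z_1/z_2$. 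Dividing these and extracting the positive twelfth root yields the clean intermediate relation $P^{4}=m Q^{2}$, so that $m=P^{4}/Q^{2}$.

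Next, I would use Lemma 2.3 to express $\alpha$ and $\beta$ in terms of $m$ alone. Eliminating $\sqrt{1-\alpha}$ between the two equations of Lemma 2.3 factors as $m(1-\sqrt{\beta})=1-\beta$, so $m=1+\sqrt{\beta}$ and $\sqrt{1-\alpha}=(1-\sqrt{\beta})/(1+\sqrt{\beta})$. Setting $t:=\sqrt{\beta}$ and simplifying rationally, the ratio $\alpha(1-\alpha)/[\beta(1-\beta)]$ collapses to $4(1-t)/[t(1+t)^{5}]$; multiplying by $m^{6}=(1+t)^{6}$ reduces the second display above to the compact identity
\[
Q^{12}=\frac{4(1-\beta)}{\sqrt{\beta}}.
\]

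To finish, I would combine the two ingredients. Substituting $m=P^{4}/Q^{2}$ into $m=1+\sqrt{\beta}$ gives $\sqrt{\beta}=(P^{4}-Q^{2})/Q^{2}$, and a short expansion then produces $1-\beta = P^{4}(2Q^{2}-P^{4})/Q^{4}$. Inserting both into $Q^{12}=4(1-\beta)/\sqrt{\beta}$ and clearing the denominator $Q^{2}(P^{4}-Q^{2})$ rearranges directly to
\[
Q^{16}-P^{4}Q^{14}+8P^{4}Q^{2}-4P^{8}=0,
\]
which is the stated modular equation. There is no deep obstacle; the only delicate step is the Lemma 2.3 elimination together with the bookkeeping of square roots, and the positive sign choices are forced throughout by the positivity of $P$, $Q$, $m$, $\alpha$, $\beta$ for $q\in(0,1)$.
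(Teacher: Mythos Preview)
Your proof is correct and follows essentially the same route as the paper: parameterize $P$ and $Q$ via Lemma~2.2 to obtain $m=P^{4}/Q^{2}$, then use Lemma~2.3 to eliminate $\alpha$ and $\beta$ in favour of a single variable and substitute back. The only cosmetic difference is that the paper parameterizes by $m$ (with $\alpha=4(m-1)/m^{2}$, $\beta=(m-1)^{2}$) and must discard a spurious factor $(m-1)(m-2)$ by a limiting argument $q\to 0$, whereas your choice to simplify the ratio $\alpha(1-\alpha)/\beta(1-\beta)$ first via $t=\sqrt{\beta}$ yields the clean intermediate identity $Q^{12}=4(1-\beta)/\sqrt{\beta}$ and reaches the result without any extraneous factors.
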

\begin{proof}
Transcribing $P,$ and $Q$ using  \eqref{feq1}, and \eqref{feq2}, then simplifying, we arrive at
\begin{align}\label{eqn1}
P=\sqrt{\frac{z_1}{z_2}}\left(\frac{\alpha(1-\alpha)}{\beta(1-\beta)}\right)^{1/24}
\quad ; \quad
Q=\sqrt{\frac{z_1}{z_2}}\left(\frac{\alpha(1-\alpha)}{\beta(1-\beta)}\right)^{1/12},
\end{align}
where, $\beta$ is of degree 2 over $\alpha$. It follow that
\begin{align}\label{e2qn1}
\frac{Q}{P}=\left(\frac{\alpha(1-\alpha)}{\beta(1-\beta)}\right)^{1/24}
\quad ; \quad
m =\frac{P^4}{Q^2}.
\end{align}
Now isolating $\alpha,$ and $\beta$ from \eqref{e22qn1}, and \eqref{e2q2n1}, we deduce that
\begin{align}\label{e2qnn1}
\alpha = \frac{4(m-1)}{m^2}
\quad ; \quad
\beta = \left(m-1\right)^2.
\end{align}
By \eqref{e2qnn1}, we observe that
\begin{align}\label{en2qnn1}
\alpha(1-\alpha) = \frac{4(m-1)(m-2)^2}{m^4}
\quad  ; \quad
\beta(1-\beta) = -m(m-1)^2(m-2).
\end{align}
Employing \eqref{en2qnn1} in first term of \eqref{e2qn1}, and simplifying, we arrive at
\begin{eqnarray*}
\left(m-1\right)\left(m-2\right)\left(m^6Q^{24}-m^5Q^{24}+4mP^{24}-8P^{24}\right)&=&0.
\end{eqnarray*}
We observe that the last factor of the above equation vanish for $q\rightarrow 0,$ whereas, the other factors does not vanish for that specific value. Thus, we obtain that
\begin{eqnarray*}\label{5e9n}
m^6Q^{24}-m^5Q^{24}+4mP^{24}-8P^{24}&=& 0.
\end{eqnarray*}
Now applying the value of $m$ in the above equation, we complete the proof.
\end{proof}
\begin{theorem}\label{thfe3}
If $n$ is any positive rational, and
\begin{eqnarray*}
\Lambda &=& \frac{g^{12}_{2n}+g^{-12}_{2n}}{2},
\end{eqnarray*}
then,
\begin{eqnarray}\label{t3}
\frac{G_{2n}}{G_{n/2}} &=& \frac{1}{g_{2n}}\left(\sqrt{\Lambda}+\sqrt{\Lambda-1}\right)^{1/4},
\end{eqnarray}
$\displaystyle  \left(\sqrt{2}G_{2n}G_{n/2}\right)^{12}-16\left(\left(\sqrt{2}G_{2n}G_{n/2}\right)^4+\left(\sqrt{2}G_{2n}G_{n/2}\right)^{-4}\right)$
\begin{eqnarray}\label{thev3}
&=& 16g^{12}_{2n}\left(\sqrt{\Lambda}+\sqrt{\Lambda-1}\right)+
 16g^{-12}_{2n}\left(\sqrt{\Lambda}-\sqrt{\Lambda-1}\right).
\end{eqnarray}
\end{theorem}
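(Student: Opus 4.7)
The plan is to evaluate both Theorem 3.1 and Lemma~\ref{thkhe2} at the common base $q=e^{-\pi\sqrt{n/2}}$, translating all theta-function factors into class invariants via the factorizations $f(q)=\chi(q)f(-q^2)$, $f(-q)=\chi(-q)f(-q^2)$, and $f(-q^{2k})/f(-q^{2k+2})=\chi(-q^{2k})$, combined with the Weber--Ramanujan dictionary $\chi(q)=2^{1/4}q^{1/24}G_{n/2}$, $\chi(-q)=2^{1/4}q^{1/24}g_{n/2}$, $\chi(\pm q^{2})=2^{1/4}q^{1/12}\{G_{2n},g_{2n}\}$, $\chi(-q^{4})=2^{1/4}q^{1/6}g_{8n}$. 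All fractional $q$-powers cancel, and one obtains for Theorem 3.1 the expressions $P=2^{1/4}G_{n/2}g_{2n}/G_{2n}$, $Q=2^{1/4}g_{2n}$, hence $P/Q=G_{n/2}/G_{2n}$, and for Lemma~\ref{thkhe2} the pair $P=\sqrt{2}g_{n/2}g_{2n}$, $Q=\sqrt{2}g_{2n}g_{8n}$, which by the identity $g_{4m}=2^{1/4}g_{m}G_{m}$ becomes $PQ=2g_{2n}^{4}G_{2n}/G_{n/2}$ and $Q/P=\sqrt{2}\,G_{2n}G_{n/2}$.

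\textbf{Proof of \eqref{t3}.} I would view \eqref{the1} as a quadratic in the multiplier $m=P^{4}/Q^{2}=\varphi^{2}(q)/\varphi^{2}(q^{2})$: dividing \eqref{the1} by $Q^{14}$ yields $4m^{2}+(Q^{12}-8)m-Q^{12}=0$, whose positive root $m=\bigl[(8-Q^{12})+\sqrt{Q^{24}+64}\,\bigr]/8$ is singled out by the $q\to 0^{+}$ asymptotic $m\to 1$. Inserting $Q=2^{1/4}g_{2n}$ simplifies this to $m=1-g_{2n}^{12}+\sqrt{g_{2n}^{24}+1}$, and equating with $m=\sqrt{2}(G_{n/2}/G_{2n})^{4}g_{2n}^{2}$, then inverting and rationalizing (the conjugate product equals $-2g_{2n}^{12}$), produces
\[
(G_{2n}/G_{n/2})^{4}=\frac{g_{2n}^{12}-1+\sqrt{g_{2n}^{24}+1}}{\sqrt{2}\,g_{2n}^{10}}.
\]
The definition of $\Lambda$ yields the algebraic identities $g_{2n}^{24}+1=2g_{2n}^{12}\Lambda$ and $(g_{2n}^{12}-1)^{2}=2g_{2n}^{12}(\Lambda-1)$, hence $\sqrt{g_{2n}^{24}+1}=g_{2n}^{6}\sqrt{2\Lambda}$ and (for $g_{2n}\ge 1$) $g_{2n}^{12}-1=g_{2n}^{6}\sqrt{2(\Lambda-1)}$. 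These collapse the numerator to $\sqrt{2}\,g_{2n}^{6}(\sqrt{\Lambda}+\sqrt{\Lambda-1})$, giving $(G_{2n}/G_{n/2})^{4}=(\sqrt{\Lambda}+\sqrt{\Lambda-1})/g_{2n}^{4}$, i.e.\ \eqref{t3}.

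\textbf{Proof of \eqref{thev3}.} Substituting $Q/P=\sqrt{2}G_{2n}G_{n/2}$ into the right-hand side of Lemma~\ref{thkhe2} identifies it exactly with the left-hand side of \eqref{thev3}. Meanwhile the left-hand side of Lemma~\ref{thkhe2}, $(PQ)^{4}+(4/PQ)^{4}$, becomes $16g_{2n}^{16}(G_{2n}/G_{n/2})^{4}+16(G_{n/2}/G_{2n})^{4}/g_{2n}^{16}$. Applying \eqref{t3} to the first term and its rationalized conjugate $(G_{n/2}/G_{2n})^{4}=g_{2n}^{4}(\sqrt{\Lambda}-\sqrt{\Lambda-1})$ (which follows from $(\sqrt{\Lambda}+\sqrt{\Lambda-1})(\sqrt{\Lambda}-\sqrt{\Lambda-1})=1$) to the second gives precisely $16g_{2n}^{12}(\sqrt{\Lambda}+\sqrt{\Lambda-1})+16g_{2n}^{-12}(\sqrt{\Lambda}-\sqrt{\Lambda-1})$, the right-hand side of \eqref{thev3}. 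The chief technical care throughout is sign and branch bookkeeping: the correct quadratic root is fixed by the $q\to 0^{+}$ asymptotic, and the identification $g_{2n}^{12}-1=g_{2n}^{6}\sqrt{2(\Lambda-1)}$ requires the hypothesis $g_{2n}\ge 1$ (valid for $n\ge 1$); apart from these choices, the argument is pure algebra.
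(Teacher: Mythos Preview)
Your proof is correct and follows essentially the same route as the paper: specialize Theorem~3.1 and Lemma~\ref{thkhe2} at $q=e^{-\pi\sqrt{n/2}}$, convert $P,Q$ into class invariants, apply $g_{4m}=2^{1/4}g_mG_m$, and then feed \eqref{t3} back into Lemma~\ref{thkhe2} to obtain \eqref{thev3}. The only organizational difference is that you solve \eqref{the1} as a quadratic in the multiplier $m=P^{4}/Q^{2}$ whereas the paper solves directly for $P/Q$ (its equation~(3.4)); these are algebraically equivalent, and your explicit $q\to 0^{+}$ check for the root choice is a welcome clarification of the paper's phrase ``choosing the appropriate root.'' One small remark: your side condition $g_{2n}\ge 1$ actually holds for $n\ge 1/2$ (since $g_{1}=1$), not just $n\ge 1$; for smaller $n$ the sign of $g_{2n}^{12}-1$ flips, but the paper's formulation via $P/Q$ (which yields $G_{n/2}/G_{2n}=g_{2n}(\sqrt{\Lambda}-\sqrt{\Lambda-1})^{1/4}$ directly) avoids this branch issue and shows \eqref{t3} holds uniformly.
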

\begin{proof}
Solving \eqref{the1} for $P/Q,$ and choosing the appropriate root, we obtain that
\begin{eqnarray}\label{tko3}
\frac{P}{Q} &=& \frac{Q}{2^{3/4}}\left(\sqrt{Q^{12}+\frac{64}{Q^{12}}}-\sqrt{Q^{12}+\frac{64}{Q^{12}}-16}\right)^{1/4}.
\end{eqnarray}
We observed that some representations for $G_n$ and $g_n$  in terms of $f(q)$ and $f(-q)$ by Entry 24(iii) \cite[p.39]{Berndt-notebook-3} as follow:
\begin{align}\label{eqgn1}
G_n = \frac{f(q)}{2^{1/4}q^{1/24}f(-q^2)}
\quad ; \quad
g_n = \frac{f(-q)}{2^{1/4}q^{1/24}f(-q^2)}.
\end{align}
Employing \eqref{eqgn1} in \eqref{tko3} along with $q=e^{-\pi\sqrt{n/2}}$, we arrive at \eqref{t3}. Now, setting $q=e^{-\pi\sqrt{n/2}}$ in Lemma \ref{thkhe2} and employing the definition of $g_n$, we obtain that
\begin{align}\label{trs}
PQ = 2g^2_{2n}g_{n/2}g_{8n}
\quad ; \quad
\frac{P}{Q} = \frac{g_{n/2}}{g_{8n}}.
\end{align}
Now applying \eqref{identy} in \eqref{trs}, we deduce that
\begin{align}\label{esqgn1}
PQ = \frac{2g^4_{2n}G_{2n}}{G_{n/2}}
\quad ; \quad
\frac{P}{Q} = \frac{1}{\sqrt{2}G_{2n}G_{n/2}}.
\end{align}
By \eqref{t3}, we observe that
\begin{eqnarray}\label{tgjhe3}
\frac{g^4_{2n}G_{2n}}{G_{n/2}} &=& g^3_{2n}\left(\sqrt{\frac{g^{12}_{2n}+g^{-12}_{2n}}{2}}+\sqrt{\frac{g^{12}_{2n}+g^{-12}_{2n}}{2}-1}\right)^{1/4}.
\end{eqnarray}
Now applying \eqref{esqgn1}, and \eqref{tgjhe3} in \eqref{ttthe2}, we obtain \eqref{thev3}.
\end{proof}
\begin{theorem}\label{the3}
If $m$ is any positive rational, then
\begin{eqnarray*}
a_{m, 2} &=& \frac{1}{g^{6}_{2m}}\left(\sqrt{\frac{g^{12}_{2m}+g^{-12}_{2m}}{2}}+\sqrt{\frac{g^{12}_{2m}+g^{-12}_{2m}}{2}-1}\right)^{1/2}.
\end{eqnarray*}
\end{theorem}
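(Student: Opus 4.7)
The plan is to recognize $a_{m,2}$ as essentially $2/(PQ)^2$, where $P$ and $Q$ are the quantities of Theorem 3.1, and then to harvest the formula already proved in Theorem \ref{thfe3} to express $PQ$ in terms of the single class invariant $g_{2m}$.

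Setting $q=e^{-\pi\sqrt{m/2}}$, I would begin from the alternate form \eqref{dseq2}, which for $n=2$ reads $a_{m,2}=2q^{1/4}\psi^2(-q^2)\varphi^2(q^2)/[\psi^2(-q)\varphi^2(q)]$, and apply Lemma \ref{lem1} twice—once with argument $q$ and once with argument $q^2$—to convert each product $\psi(-q^k)\varphi(q^k)$ into $f(q^k)f(-q^{2k})$. This rewrites $a_{m,2}$ as $2q^{1/4}f^2(q^2)f^2(-q^4)/[f^2(q)f^2(-q^2)]$. A direct check that the total $q$-exponent $1/4-1/12-1/6$ collapses to zero then reveals the clean identity $a_{m,2}=2/(PQ)^2$, with $P=f(q)/[q^{1/24}f(q^2)]$ and $Q=f(-q^2)/[q^{1/12}f(-q^4)]$ exactly as in Theorem 3.1.

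The next step is to translate $P$ and $Q$ back into class invariants via \eqref{eqgn1}. Reading that identity with subscripts $m/2$ and $2m$—so that the chosen $q$ corresponds to $G_{m/2}$ while $q^2=e^{-\pi\sqrt{2m}}$ corresponds to $g_{2m}$ and $G_{2m}$—gives $Q=2^{1/4}g_{2m}$ directly, and substituting $f(q)=2^{1/4}q^{1/24}G_{m/2}f(-q^2)$ together with $f(q^2)=2^{1/4}q^{1/12}G_{2m}f(-q^4)$ into the definition of $P$ simplifies it to $2^{1/4}g_{2m}G_{m/2}/G_{2m}$. Consequently $PQ=\sqrt{2}\,g^2_{2m}\,G_{m/2}/G_{2m}$.

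The final step invokes \eqref{t3} (equivalently \eqref{tgjhe3}), which after inversion yields $G_{m/2}/G_{2m}=g_{2m}(\sqrt{\Lambda}+\sqrt{\Lambda-1})^{-1/4}$ with $\Lambda=(g^{12}_{2m}+g^{-12}_{2m})/2$. Squaring $PQ=\sqrt{2}\,g^3_{2m}(\sqrt{\Lambda}+\sqrt{\Lambda-1})^{-1/4}$ gives $(PQ)^2=2g^6_{2m}(\sqrt{\Lambda}+\sqrt{\Lambda-1})^{-1/2}$, and substitution into $a_{m,2}=2/(PQ)^2$ produces the claimed formula. The only real obstacle is notational bookkeeping: the letter $q$ must be tracked carefully as the class invariant subscripts slide between $m/2$ and $2m$, and the pair $(P,Q)$ of Theorem 3.1 must not be confused with the identically named pair of Lemma \ref{thkhe2} that also appeared in the preceding proof.
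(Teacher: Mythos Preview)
Your argument is correct and follows essentially the same path as the paper. Both reduce $a_{m,2}$ to $2/(PQ)^2$ via Lemma~\ref{lem1} and then use the modular equation of Theorem~3.1 together with $Q=2^{1/4}g_{2m}$; the only cosmetic difference is that the paper re-solves \eqref{the1} directly for $P^4Q^4$ (its equation \eqref{tkjhhe3}, which is just \eqref{tko3} multiplied by $Q^8$), whereas you obtain the same information by writing $P=Q\,G_{m/2}/G_{2m}$ and quoting \eqref{t3} from Theorem~\ref{thfe3}.
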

\begin{proof}
Solving \eqref{the1} for $P^4Q^4,$ and choosing the appropriate root, we arrive at
\begin{eqnarray}\label{tkjhhe3}
P^4Q^4 &=& \frac{Q^{12}}{8}\left(\sqrt{Q^{12}+\frac{64}{Q^{12}}}-\sqrt{Q^{12}+\frac{64}{Q^{12}}-16}\right).
\end{eqnarray}
Let $q=e^{-\pi\sqrt{m/2}}$, then the identity \eqref{dseq2}, becomes
\begin{equation} \label{weq2}
 a_{m,2}=\frac{2q^{1/4}\psi^2\left(-q^2\right)\varphi^2\left(q^{2}\right)}{\psi^2\left(-q\right)\varphi^2\left(q\right)}.
 \end{equation}
Now applying \eqref{1lem} in \eqref{weq2}, we conclude that
\begin{equation} \label{eq2}
 a_{m,2}=\frac{2q^{1/4}f^2(q^2)f^2(-q^4)}{f^2(q)f^2(-q^2)}.
 \end{equation}
Employing the second term of \eqref{eqgn1} in \eqref{tkjhhe3} along with $q=e^{-\pi\sqrt{m/2}}$, then it follow that reporting in \eqref{eq2}, we arrive at desired result.
\end{proof}
\section{Explicit evaluations}
In section, we compute several explicit evaluations of class invariant $G_n$ for even values of $n,$ and $a_{m, 2}$ by using Theorem \ref{thfe3}, and Theorem \ref{the3} respectively. After obtaining class invariant $G_n$, then we evaluate several explicit evaluations of class invariant $g_n,$ and singular moduli $\alpha_n$.
\begin{theorem}\label{tkhe4}
We have
\begin{eqnarray*}
G_{46}&=& \frac{1}{2^{1/8}}\left(78\sqrt{2}+23\sqrt{23}\right)^{1/16}\left(\frac{5+\sqrt{23}}{\sqrt{2}}\right)^{1/8}
\left(\sqrt{\frac{3\sqrt{2}+8}{4}}+\sqrt{\frac{3\sqrt{2}+4}{4}}\right)^{1/2}\\ && \times\left(\sqrt{\frac{6\sqrt{2}+11}{4}}-\sqrt{\frac{6\sqrt{2}+7}{4}}\right)^{1/4}.
\end{eqnarray*}
\end{theorem}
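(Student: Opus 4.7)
The plan is to specialize Theorem~\ref{thfe3} at $n=23$. Equation~\eqref{t3} provides the ratio $G_{46}/G_{23/2}$ as an explicit function of the class invariant $g_{46}$, and equation~\eqref{thev3} provides a polynomial identity for the product $\sqrt{2}\,G_{46}G_{23/2}$ in the same data. Since
\[
G_{46}^{2} \;=\; \frac{G_{46}}{G_{23/2}}\cdot\bigl(G_{46}G_{23/2}\bigr),
\]
$G_{46}$ can be recovered by computing the ratio, computing the product, multiplying, and extracting the positive square root. The entire argument is driven by a single input, namely the auxiliary quantity $\Lambda=(g_{46}^{12}+g_{46}^{-12})/2$.

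The first step is to quote the explicit value of $g_{46}$ from the classical table of class invariants (Ramanujan's second notebook, reproved by Berndt et al.\ in Chapter~34 of \cite{Berndt-notebook-5}); note that $46$ is even, so $g_{46}$ falls in the traditional range in which $g_n$ is tabulated. Based on the structure of the stated answer, I expect $g_{46}^{12}+g_{46}^{-12}$ to collapse to $(3\sqrt{2}+8)/2$, so that $\Lambda=(3\sqrt{2}+8)/4$ and $\Lambda-1=(3\sqrt{2}+4)/4$. Substituted into \eqref{t3}, this produces the nested-radical expression built from $\sqrt{(3\sqrt{2}+8)/4}$ and $\sqrt{(3\sqrt{2}+4)/4}$ visible in the stated formula for $G_{46}$.

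The second step is to substitute the same $\Lambda$ into \eqref{thev3}. Writing $y=(\sqrt{2}\,G_{46}G_{23/2})^{4}$ and clearing the $1/y$ term turns the identity into a quartic in $y$ with coefficients in $\mathbb{Q}(\sqrt{2})$, after using the relation $g_{46}^{12}\cdot g_{46}^{-12}=1$ to simplify the right-hand side of \eqref{thev3} to a combination of $\sqrt{\Lambda}$ and $\sqrt{\Lambda^{2}-1}\,$. I expect this quartic to factor over a quadratic extension, with the unique real positive root (selected by tracking $P,Q$ as $q\to 0^{+}$) producing the factor involving $(6\sqrt{2}+11)/4$ and $(6\sqrt{2}+7)/4$ that appears in the stated answer with exponent $1/4$.

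Finally, multiplying the ratio by the product, taking a square root, and collecting the remaining powers of $g_{46}$ and $\sqrt{2}$ should yield the prefactor $2^{-1/8}(78\sqrt{2}+23\sqrt{23})^{1/16}\bigl((5+\sqrt{23})/\sqrt{2}\bigr)^{1/8}$; the two blocks of surds are precisely what is needed to account for $g_{46}^{-1/2}$ together with the $(\sqrt{\Lambda}+\sqrt{\Lambda-1})^{1/8}$ contribution from the ratio. The principal obstacle is the algebraic factorization of the quartic in the second step and the clean identification of its positive root as a nested radical; the remaining bookkeeping of exponents is routine, and the final expression can be cross-checked numerically against $G_{46}=2^{-1/4}e^{\pi\sqrt{46}/24}\chi(e^{-\pi\sqrt{46}})$ from \eqref{ejhq1}.
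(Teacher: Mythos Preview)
Your overall plan is exactly the paper's: specialize Theorem~\ref{thfe3} at $n=23$, read off $G_{46}/G_{23/2}$ from \eqref{t3} and $G_{46}G_{23/2}$ from \eqref{thev3}, then multiply and take the positive square root. The gap is in the specific values you predict, and it propagates through the whole sketch.

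From the tabulated invariant $g_{46}=\sqrt{(3+\sqrt{2}+\sqrt{7+6\sqrt{2}})/2}$ one computes $g_{46}^{12}+g_{46}^{-12}=2646+1872\sqrt{2}$, so $\Lambda=1323+936\sqrt{2}$, not $(3\sqrt{2}+8)/4$; your guess is too small by a factor of roughly $860$. With the correct $\Lambda$, the factor $(\sqrt{\Lambda}+\sqrt{\Lambda-1})^{1/4}$ in \eqref{t3} denests to $(78\sqrt{2}+23\sqrt{23})^{1/8}\bigl((5+\sqrt{23})/\sqrt{2}\bigr)^{1/4}$, while the remaining factor $g_{46}^{-1}$ is precisely $\bigl(\sqrt{(6\sqrt{2}+11)/4}-\sqrt{(6\sqrt{2}+7)/4}\bigr)^{1/2}$. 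Thus the block involving $(6\sqrt{2}+11)/4$ comes from the \emph{ratio}, not from the product equation. Conversely, the block involving $(3\sqrt{2}+8)/4$ and $(3\sqrt{2}+4)/4$ arises from the \emph{product} side: the right-hand side of \eqref{thev3} is $32\Lambda^{3/2}+32(\Lambda-1)\sqrt{\Lambda+1}$, which does not lie in $\mathbb{Q}(\sqrt{2})$, so after clearing radicals the paper lands on a degree-$32$ polynomial in $h=\sqrt{2}\,G_{46}G_{23/2}$ with coefficients in $\mathbb{Q}(\sqrt{2})$ (not a quartic in $y=h^4$ over that field), separates rational and $\sqrt{2}$-parts, and factors down to $h^{4}+4h^{-4}=104+72\sqrt{2}$; the positive root then denests to $G_{46}G_{23/2}=2^{-1/4}\bigl(\sqrt{(3\sqrt{2}+8)/4}+\sqrt{(3\sqrt{2}+4)/4}\bigr)$. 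With these corrections your final multiply-and-square-root step is exactly what the paper does.
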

\begin{proof}
From the table in Chapter 34 of Ramanujan’s notebooks \cite[p.201]{Berndt-notebook-5}, we have
\begin{eqnarray*}
g_{46} &=& \sqrt{\frac{3+\sqrt{2}+\sqrt{7+6\sqrt{2}}}{2}}.\label{1thec003}
\end{eqnarray*}
It follows that
\begin{eqnarray}
g^{12}_{46}+g^{-12}_{46}&=& 2646+1872\sqrt{2}.\label{1thec003}
\end{eqnarray}
Employing \eqref{1thec003} in \eqref{t3} with $n=23$, we conclude that
\begin{eqnarray}\nonumber
\frac{G_{46}}{G_{23/2}} &=& \left(2645+1872\sqrt{2}+\sqrt{14004792+9902880\sqrt{2}}\right)^{1/8}\\ && \times
 \left(\sqrt{\frac{6\sqrt{2}+11}{4}}-\sqrt{\frac{6\sqrt{2}+7}{4}}\right)^{1/2} .\label{1bthec003}
\end{eqnarray}
By $(9.5)$ \cite[ p.284]{Berndt-notebook-3}, observe that
\begin{eqnarray}
\sqrt{14004792+9902880\sqrt{2}} &=& 552\sqrt{23}+390\sqrt{46}.\label{2themmco003}
\end{eqnarray}
Reporting \eqref{2themmco003} in \eqref{1bthec003}, and further simplification, we obtain that
\begin{eqnarray}
\frac{G_{46}}{G_{23/2}} &=& \left(78\sqrt{2}+23\sqrt{23}\right)^{1/8}\left(\frac{5+\sqrt{23}}{\sqrt{2}}\right)^{1/4}
 \left(\sqrt{\frac{6\sqrt{2}+11}{4}}-\sqrt{\frac{6\sqrt{2}+7}{4}}\right)^{1/2} .\label{1thnvnm03}
\end{eqnarray}
Applying \eqref{1thec003} in \eqref{thev3}, and after a straightforward, lengthy calculation, we deduce that
\begin{eqnarray*}\nonumber
h^{32}-32h^{24}-\left(4356352+3080448\sqrt{2}\right)h^{20}+224h^{16}+\left(69701632+49287168\sqrt{2}\right)h^{12} && \\
-\left(3587934720+2537054208\sqrt{2}\right)h^8+\left(69701632+49287168\sqrt{2}\right)h^4+256 &=& 0 ,\label{1thnec003}
\end{eqnarray*}
where, $\displaystyle h = \sqrt{2}G_{46}G_{23/2}$. Now isolating the terms involving $\sqrt{2}$ on one side of the above equation, squaring both sides, and simplifying, we deduce that
\begin{eqnarray*}
\left(h^{16}-208h^{12}+456h^8-832h^4+16\right) \left(h^{48}+208h^{44}+42744h^{40}+84032h^{36}-1838096h^{32}\right. && \\ \left.
+33126912h^{28}+104902912h^{24}-1089853440h^{20}-761938176h^{16}+10947629056h^{12} \right.&& \\ \left. -7091652608h^{8}+2230665216h^{4}+4096\right)&=& 0.\label{mhnec003}
\end{eqnarray*}
A numerical calculation show that $h$ is not a root of the second factor. Since the first factor has positive roots, and it follows that
\begin{eqnarray*}
h^{16}-208h^{12}+456h^8-832h^4+16 &=& 0, \label{mhnec003}
\end{eqnarray*}
or equivalently,
\begin{eqnarray*}
\left(h^4+\frac{4}{h^4}\right)^2-208\left(h^4+\frac{4}{h^4}\right)+ 448 &=& 0.\label{vmhnec003}
\end{eqnarray*}
Solving the above quadric equation, and choosing the convenient root, we deduce that
\begin{eqnarray}
h^4+\frac{4}{h^4}&=& 104+72\sqrt{2}.\label{vmnnec003}
\end{eqnarray}
Again solving \eqref{vmnnec003} for $h,$ and $h > 1$, we obtain that
\begin{eqnarray}
G_{46}G_{23/2}&=& \frac{1}{2^{1/4}}\left(26+18\sqrt{2}+\sqrt{1323+936\sqrt{2}}\right)^{1/4}.\label{nec003}
\end{eqnarray}
Now we apply Lemma 9.10 \cite[ p.292]{Berndt-notebook-5} with $r=26+18\sqrt{2}$. Then $t=\left(3\sqrt{2}+6\right)/4,$ and so
\begin{eqnarray}\label{6th288880}
26+18\sqrt{2}+\sqrt{1323+936\sqrt{2}}&=&\left(\sqrt{\frac{3\sqrt{2}+8}{4}}+\sqrt{\frac{3\sqrt{2}+4}{4}}\right)^4.
\end{eqnarray}
Now combining \eqref{1thnvnm03}, \eqref{nec003}, and \eqref{6th288880}, we arrive at the desired result.
\end{proof}
\begin{theorem}\label{the4}
We have
\begin{eqnarray*}
G_{14}&=&\frac{1}{2^{1/8}}\left(2\sqrt{2}+\sqrt{7}\right)^{1/16}\left(\frac{3+\sqrt{7}}{\sqrt{2}}\right)^{1/8}
\left(\sqrt{\frac{2\sqrt{2}+3}{2}}+\sqrt{\frac{2\sqrt{2}+1}{2}}\right)^{1/4}\\ && \times\left(\sqrt{\frac{2\sqrt{2}+3}{4}}-\sqrt{\frac{2\sqrt{2}-1}{4}}\right)^{1/4},\\
G_{22}&=& \frac{1}{2^{1/8}}\left(\sqrt{2}-1\right)^{1/4}\left(3\sqrt{11}+7\sqrt{2}\right)^{1/8}\left(\frac{\sqrt{11}+3}{\sqrt{2}}\right)^{1/4},
\end{eqnarray*}
\begin{eqnarray*}
G_{34}&=&\frac{1}{2^{1/8}}\left(\sqrt{2}+1\right)^{1/4}\left(3\sqrt{2}+\sqrt{17}\right)^{1/8}
\left(\sqrt{\frac{\sqrt{17}+5}{4}}+\sqrt{\frac{\sqrt{17}+1}{4}}\right)^{1/2}\\ && \times\left(\sqrt{\frac{3\sqrt{17}+13}{8}}-\sqrt{\frac{3\sqrt{17}+5}{8}}\right)^{1/4},\\
G_{58}&=& \frac{1}{2^{1/8}}\left(\sqrt{2}+1\right)^{3/4}\left(13\sqrt{58}+99\right)^{1/8}\left(\frac{\sqrt{29}-5}{2}\right)^{1/4},\\
G_{70}&=& \frac{1}{2^{1/8}}\left(\sqrt{2}-1\right)^{1/4}\left(2\sqrt{2}+\sqrt{7}\right)^{1/4}\left(3\sqrt{14}+5\sqrt{5}\right)^{1/8} \left(\frac{\sqrt{7}+\sqrt{5}}{\sqrt{2}}\right)^{1/4}\\&&
\times\left(\frac{3+\sqrt{7}}{\sqrt{2}}\right)^{1/4}\left(\frac{\sqrt{5}-1}{2}\right)^{1/2},\\
G_{82}&=& \frac{1}{2^{1/8}}\left(\sqrt{2}+1\right)^{1/2}\left(\sqrt{82}+9\right)^{1/8}
\left(\sqrt{\frac{\sqrt{41}+7}{2}}+\sqrt{\frac{\sqrt{41}+5}{2}}\right)^{1/2}\\ && \times\left(\sqrt{\frac{\sqrt{41}+13}{8}}-\sqrt{\frac{\sqrt{41}+5}{8}}\right)^{1/2},\\
G_{130}&=& \frac{1}{2^{1/8}}\left(\sqrt{2}+1\right)^{1/2}\left(\sqrt{10}+3\right)^{1/4}
\left(\sqrt{26}+5\right)^{1/4}\left(5\sqrt{130}+57\right)^{1/8}\\&&
\times\left(\frac{\sqrt{5}-1}{2}\right)^{3/4}\left(\frac{\sqrt{13}-3}{2}\right)^{1/4},\\
G_{142}&=& \frac{1}{2^{1/8}}\left(287\sqrt{71}+1710\sqrt{2}\right)^{1/16}\left(\frac{59+7\sqrt{71}}{\sqrt{2}}\right)^{1/8}
\left(\sqrt{\frac{27\sqrt{2}+40}{4}}+\sqrt{\frac{27\sqrt{2}+36}{4}}\right)^{1/2}\\ && \times\left(\sqrt{\frac{90\sqrt{2}+131}{4}}-\sqrt{\frac{90\sqrt{2}+127}{4}}\right)^{1/4},\\
G_{190}&=& \frac{1}{2^{1/8}}\left(\sqrt{10}-3\right)^{1/4}\left(2\sqrt{5}+\sqrt{19}\right)^{1/4}\left(\sqrt{19}+3\sqrt{2}\right)^{1/4}
\left(37\sqrt{19}+51\sqrt{10}\right)^{1/8}\\&&
\times \left(\frac{3\sqrt{19}+13}{\sqrt{2}}\right)^{1/4}\left(\frac{\sqrt{5}-1}{2}\right)^{3/4}.
\end{eqnarray*}
\end{theorem}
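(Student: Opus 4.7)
The plan is to repeat, for each of the nine cases, the procedure carried out in the proof of Theorem \ref{tkhe4}. The input data for $G_{2n}$ is the known value of $g_{2n}$, which can be read off from Ramanujan's tabulation of class invariants in Chapter 34 of \cite{Berndt-notebook-5}: we require $g_{14}$, $g_{22}$, $g_{34}$, $g_{58}$, $g_{70}$, $g_{82}$, $g_{130}$, $g_{142}$, $g_{190}$, all of which are listed there.

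For each such $n$, I would proceed in four steps. First, compute the auxiliary quantity $\Lambda = (g^{12}_{2n} + g^{-12}_{2n})/2$ from the known $g_{2n}$; since the tabulated values are expressions in nested radicals over $\mathbb{Q}(\sqrt{2})$ (or a closely related small real quadratic field), this is a routine simplification. Second, substitute this $\Lambda$ into the ratio identity \eqref{t3} of Theorem \ref{thfe3} to obtain $G_{2n}/G_{n/2}$, simplifying and denesting the resulting surds by means of the auxiliary identities recorded in Berndt's notebooks, for instance $(9.5)$ of \cite[p.\ 284]{Berndt-notebook-3}, which was already invoked in the proof of Theorem \ref{tkhe4}.

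Third, substitute the same $\Lambda$ into the product identity \eqref{thev3}, which becomes a polynomial equation of degree thirty-two in $h = \sqrt{2}\, G_{2n} G_{n/2}$. In each case this polynomial should factor, as it did in Theorem \ref{tkhe4}, into one palindromic factor of the shape $h^{16} + A h^{12} + B h^8 + C h^4 + 16$, which reduces via the substitution $h^4 + 4/h^4$ to a quadratic, together with higher-degree extraneous factors that can be ruled out by a crude numerical estimate of $h$. Solving the quadratic and then extracting a fourth root yields a closed form for the product $G_{2n} G_{n/2}$. Multiplying this product by the ratio obtained in the second step and taking a square root then gives $G_{2n}$.

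The main obstacle is the final denesting of the quartic nested radicals appearing in $G_{2n}G_{n/2}$ and in the ratio formula: the raw output of the quadratic formula has the shape $\alpha + \beta\sqrt{m} + \sqrt{\gamma + \delta\sqrt{m}}$, and to recover the clean products displayed in the theorem statement one must invoke Ramanujan's denesting lemmas (e.g.\ Lemma 9.10 of \cite[p.\ 292]{Berndt-notebook-5}) with the correct choice of auxiliary parameter $t$. Identifying this parameter, particularly in the heavier cases $G_{130}$, $G_{142}$, and $G_{190}$ where several denestings must be chained together and where an auxiliary factorisation of the class invariant over a biquadratic field comes into play, is the one genuinely delicate step in what is otherwise a mechanical repetition of the argument of Theorem \ref{tkhe4}.
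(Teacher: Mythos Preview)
Your proposal is correct and follows exactly the paper's own approach: the paper's proof simply says to employ the known class invariants $g_{2n}$ for $n=7,11,17,29,35,41,65,71,95$ from \cite[pp.~200--203]{Berndt-notebook-5} in Theorem \ref{thfe3}, noting that the argument is analogous to the proof of Theorem \ref{tkhe4} and omitting the details. Your four-step outline is precisely that analogous argument spelled out.
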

\begin{proof}
Employing class invariant $g_n$ for $ n = 14, 22, 34, 58, 70, 82, 130, 142,$ and $190$ \cite[p. 200-203]{Berndt-notebook-5} in Theorem \ref{thfe3}, we obtain all the above values. Since the proof is analogous to the previous theorem, so we omit the details.
\end{proof}
\begin{theorem}\label{the5}
We have
\begin{eqnarray*}
g_{56}&=&2^{1/8}\left(2\sqrt{2}+\sqrt{7}\right)^{1/16}\left(\frac{3+\sqrt{7}}{\sqrt{2}}\right)^{1/8}
\left(\sqrt{\frac{2\sqrt{2}+3}{2}}+\sqrt{\frac{2\sqrt{2}+1}{2}}\right)^{1/4}\\ && \times\left(\sqrt{\frac{2\sqrt{2}+3}{4}}+\sqrt{\frac{2\sqrt{2}-1}{4}}\right)^{1/4}\\
g_{88}&=& 2^{1/8}\left(\sqrt{2}+1\right)^{1/4}\left(3\sqrt{11}+7\sqrt{2}\right)^{1/8}\left(\frac{\sqrt{11}+3}{\sqrt{2}}\right)^{1/4},\\
g_{136}&=&2^{1/8}\left(\sqrt{2}+1\right)^{1/4}\left(3\sqrt{2}+\sqrt{17}\right)^{1/8}
\left(\sqrt{\frac{\sqrt{17}+5}{4}}+\sqrt{\frac{\sqrt{17}+1}{4}}\right)^{1/2}\\ && \times\left(\sqrt{\frac{3\sqrt{17}+13}{8}}+\sqrt{\frac{3\sqrt{17}+5}{8}}\right)^{1/4},\\
g_{184}&=& 2^{1/8}\left(78\sqrt{2}+23\sqrt{23}\right)^{1/16}\left(\frac{5+\sqrt{23}}{\sqrt{2}}\right)^{1/8}
\left(\sqrt{\frac{3\sqrt{2}+8}{4}}+\sqrt{\frac{3\sqrt{2}+4}{4}}\right)^{1/2}\\ && \times\left(\sqrt{\frac{6\sqrt{2}+11}{4}}+\sqrt{\frac{6\sqrt{2}+7}{4}}\right)^{1/4},\\
g_{232}&=& 2^{1/8}\left(\sqrt{2}+1\right)^{3/4}\left(13\sqrt{58}+99\right)^{1/8}\left(\frac{\sqrt{29}+5}{2}\right)^{1/4},\\
g_{280}&=& 2^{1/8}\left(\sqrt{2}+1\right)^{1/4}\left(2\sqrt{2}+\sqrt{7}\right)^{1/4}\left(3\sqrt{14}+5\sqrt{5}\right)^{1/8} \left(\frac{\sqrt{7}+\sqrt{5}}{\sqrt{2}}\right)^{1/4}\\&&
\times\left(\frac{3+\sqrt{7}}{\sqrt{2}}\right)^{1/4}\left(\frac{\sqrt{5}+1}{2}\right)^{1/2},\\
g_{328}&=& 2^{1/8}\left(\sqrt{2}+1\right)^{1/2}\left(\sqrt{82}+9\right)^{1/8}
\left(\sqrt{\frac{\sqrt{41}+7}{2}}+\sqrt{\frac{\sqrt{41}+5}{2}}\right)^{1/2}\\ && \times\left(\sqrt{\frac{\sqrt{41}+13}{8}}+\sqrt{\frac{\sqrt{41}+5}{8}}\right)^{1/2},\\
g_{520}&=& 2^{1/8}\left(\sqrt{2}+1\right)^{1/2}\left(\sqrt{10}+3\right)^{1/4}
\left(\sqrt{26}+5\right)^{1/4}\left(5\sqrt{130}+57\right)^{1/8}\\&&
\times\left(\frac{\sqrt{5}+1}{2}\right)^{3/4}\left(\frac{\sqrt{13}+3}{2}\right)^{1/4},
\end{eqnarray*}
\begin{eqnarray*}
g_{568}&=& 2^{1/8}\left(287\sqrt{71}+1710\sqrt{2}\right)^{1/16}\left(\frac{59+7\sqrt{71}}{\sqrt{2}}\right)^{1/8}
\left(\sqrt{\frac{27\sqrt{2}+40}{4}}+\sqrt{\frac{27\sqrt{2}+36}{4}}\right)^{1/2}\\ && \times\left(\sqrt{\frac{90\sqrt{2}+131}{4}}+\sqrt{\frac{90\sqrt{2}+127}{4}}\right)^{1/4},\\
g_{760}&=& 2^{1/8}\left(\sqrt{10}+3\right)^{1/4}\left(2\sqrt{5}+\sqrt{19}\right)^{1/4}\left(\sqrt{19}+3\sqrt{2}\right)^{1/4}
\left(37\sqrt{19}+51\sqrt{10}\right)^{1/8}\\&&
\times \left(\frac{3\sqrt{19}+13}{\sqrt{2}}\right)^{1/4}\left(\frac{\sqrt{5}+1}{2}\right)^{3/4}.
\end{eqnarray*}
\end{theorem}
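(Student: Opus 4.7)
The plan is to reduce the entire statement to the classical product formula $g_{4m}=2^{1/4}g_{m}G_{m}$ of \eqref{identy}. The ten indices appearing in Theorem \ref{the5} are exactly $4m$ for $m\in\{14,22,34,46,58,70,82,130,142,190\}$. For each such $m$, the value of $G_{m}$ has just been supplied by Theorem \ref{the4} (or, in the case $m=46$, by Theorem \ref{tkhe4}), while the corresponding value of $g_{m}$ is already tabulated in Chapter 34 of Ramanujan's notebooks \cite[pp.~200--203]{Berndt-notebook-5}. The entire proof therefore reduces to substituting both inputs into $g_{4m}=2^{1/4}g_{m}G_{m}$ and collecting factors.

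The structural reason the answers take exactly the shape displayed is transparent if one places each pair $(G_{2n},g_{8n})$ side by side. The prefactor $2^{-1/8}$ in $G_{2n}$ is replaced by $2^{1/8}$ in $g_{8n}$, which absorbs the $2^{1/4}$ from \eqref{identy}, and every remaining factor is identical up to a sign flip in one innermost surd of the form
\begin{equation*}
\bigl(\sqrt{A}-\sqrt{B}\bigr)^{1/4}\quad\longleftrightarrow\quad\bigl(\sqrt{A}+\sqrt{B}\bigr)^{1/4},
\end{equation*}
with $A-B$ equal to a small rational (often $1$). Taking the quotient $g_{8n}/(2^{1/4}G_{2n})$ and rationalising shows that the claimed formula for $g_{8n}$ is equivalent to
\begin{equation*}
g_{m}^{4}\;=\;\frac{\sqrt{A}+\sqrt{B}}{\sqrt{A}-\sqrt{B}}\;=\;(A+B)+2\sqrt{AB}\qquad (A-B=1),
\end{equation*}
which is precisely the denested form of $g_{m}^{4}$ coming out of the Berndt table entry for $g_{m}$. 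For the handful of cases that carry no nested radical (e.g.\ $g_{88}$, $g_{232}$), the same manoeuvre degenerates to the elementary $g_{m}^{4}=(\sqrt{2}+1)/(\sqrt{2}-1)=(\sqrt{2}+1)^{2}$ type identity, still obtained from \eqref{identy}.

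The only real work is therefore ten case-by-case verifications of denesting identities of the above shape, each one running exactly along the lines of the chain \eqref{1thec003}--\eqref{6th288880} used for $G_{46}$ in Theorem \ref{tkhe4}, with the sign on the offending surd reversed and $2^{1/8}$ replacing $2^{-1/8}$. The main technical obstacle is thus bookkeeping rather than anything conceptual: one must confirm, for each $m$ in the list, that the radicands $A,B$ appearing in the formula for $G_{m}$ really satisfy $A-B\in\mathbb{Q}$, and that $(A+B)+2\sqrt{AB}$ really matches $g_{m}^{4}$ as read from Berndt's table. Since every such verification is analogous to the one already performed in Theorem \ref{tkhe4}, it is natural to state the theorem and omit the ten individual simplifications, exactly as was done for Theorem \ref{the4}.
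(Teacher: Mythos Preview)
Your proposal is correct and follows exactly the paper's approach: the paper's one-line proof is ``Employing previous theorems in \eqref{identy}, we obtain all the above values,'' i.e.\ apply $g_{4m}=2^{1/4}g_{m}G_{m}$ with the $G_{m}$ from Theorems~\ref{tkhe4}--\ref{the4} and the tabulated $g_{m}$. Your additional observation about the sign-flip pattern between $G_{2n}$ and $g_{8n}$ is a nice structural explanation (not present in the paper) of why the computation collapses so cleanly, though note the exponent on the flipped factor is $1/2$ rather than $1/4$ in a couple of cases (e.g.\ $m=82$).
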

\begin{proof}
Employing pervious theorems in \eqref{identy}, we obtain all the above values.
\end{proof}
\begin{theorem}\label{the6}
We have
\begin{eqnarray*}
\alpha_{14}&=&\left(2\sqrt{2}-\sqrt{7}\right)\left(\frac{3-\sqrt{7}}{\sqrt{2}}\right)^{2}
\left(\sqrt{\frac{2\sqrt{2}+3}{2}}-\sqrt{\frac{2\sqrt{2}+1}{2}}\right)^{4},\\
\alpha_{22}&=& \left(3\sqrt{11}-7\sqrt{2}\right)^{2}\left(\frac{\sqrt{11}-3}{\sqrt{2}}\right)^{4},\\
\alpha_{34}&=&\left(\sqrt{2}-1\right)^{4}\left(3\sqrt{2}-\sqrt{17}\right)^{2}
\left(\sqrt{\frac{\sqrt{17}+5}{4}}-\sqrt{\frac{\sqrt{17}+1}{4}}\right)^{8},\\
\alpha_{46}&=& \left(78\sqrt{2}-23\sqrt{23}\right)\left(\frac{5-\sqrt{23}}{\sqrt{2}}\right)^{2}
\left(\sqrt{\frac{3\sqrt{2}+8}{4}}-\sqrt{\frac{3\sqrt{2}+4}{4}}\right)^{8},\\
\alpha_{58}&=& \left(\sqrt{2}-1\right)^{12}\left(13\sqrt{58}-99\right)^{2},\\
\alpha_{70}&=& \left(2\sqrt{2}-\sqrt{7}\right)^{4}\left(3\sqrt{14}-5\sqrt{5}\right)^{2} \left(\frac{\sqrt{7}-\sqrt{5}}{\sqrt{2}}\right)^{4}
\left(\frac{3-\sqrt{7}}{\sqrt{2}}\right)^{4},\\
\alpha_{82}&=& \left(\sqrt{2}-1\right)^{8}\left(\sqrt{82}-9\right)^{2}
\left(\sqrt{\frac{\sqrt{41}+7}{2}}-\sqrt{\frac{\sqrt{41}+5}{2}}\right)^{8},\\
\alpha_{130}&=& \left(\sqrt{2}-1\right)^{8}\left(\sqrt{10}-3\right)^{4}
\left(\sqrt{26}-5\right)^{4}\left(5\sqrt{130}-57\right)^{2},\\
\alpha_{142}&=& \left(287\sqrt{71}-1710\sqrt{2}\right)\left(\frac{59-7\sqrt{71}}{\sqrt{2}}\right)^{2}
\left(\sqrt{\frac{27\sqrt{2}+40}{4}}-\sqrt{\frac{27\sqrt{2}+36}{4}}\right)^{8},\\
\alpha_{190}&=& \left(2\sqrt{5}-\sqrt{19}\right)^{4}\left(\sqrt{19}-3\sqrt{2}\right)^{4}
\left(37\sqrt{19}-51\sqrt{10}\right)^{2} \left(\frac{3\sqrt{19}-13}{\sqrt{2}}\right)^{4}.
\end{eqnarray*}
\end{theorem}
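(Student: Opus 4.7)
My plan is to reduce $\alpha_n$ to an explicit rational combination of the class invariants $G_n$ and $g_n$, then substitute the values already in hand. Combining \eqref{feq1}--\eqref{feq2} with \eqref{eqgn1} immediately yields
\[
G_n^{24}\;=\;\frac{1}{4\alpha_n(1-\alpha_n)}.
\]
The companion formula $f(-q)=\sqrt{z}\,2^{-1/6}(1-\alpha)^{1/6}(\alpha/q)^{1/24}$ from \cite[Entry 12(ii), Ch.\,17, p.\,124]{Berndt-notebook-3}, inserted into \eqref{eqgn1} along with \eqref{feq2}, gives the partner identity $g_n^{24}=(1-\alpha_n)^2/(4\alpha_n)$. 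Forming $G_n^{48}\,g_n^{24}$ cancels the $(1-\alpha_n)^2$ factor and leaves $64\alpha_n^{3}G_n^{48}g_n^{24}=1$; taking cube roots yields the master identity
\[
\alpha_n\;=\;\frac{1}{4\,G_n^{16}\,g_n^{8}}
\]
(equivalently $\alpha_n=1-(g_n/G_n)^{8}$, via $G_n/g_n=(1-\alpha_n)^{-1/8}$).

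For each $n\in\{14,22,34,46,58,70,82,130,142,190\}$, I would substitute into this identity the value of $G_n$ proved in Theorem~\ref{tkhe4} or Theorem~\ref{the4} and the value of $g_n$ tabulated in \cite[p.\,200--203]{Berndt-notebook-5}. The stated closed form for $\alpha_n$ consists, up to one residual factor, precisely of the conjugate surds of the factors appearing in $G_n^{16}$. For $n=46$ the relevant unit pairs are $(78\sqrt{2}\pm 23\sqrt{23})$, $(5\pm\sqrt{23})/\sqrt{2}$, and $\sqrt{(3\sqrt{2}+8)/4}\pm\sqrt{(3\sqrt{2}+4)/4}$; each has product $1$ (for instance $78^2\cdot 2-23^2\cdot 23=12168-12167=1$), so three of the four nested-radical factors in $G_{46}^{16}$ cancel cleanly against the matching factors of $\alpha_{46}$ when one forms the product $4\,G_n^{16}g_n^{8}\alpha_n$.

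The core of the argument, and the main obstacle, is the residual denesting: after every conjugate-surd cancellation, one is left with a single factor of $G_n^{16}$ and the isolated $g_n^{8}$, and the theorem forces their product to equal $1$. For $n=46$ this amounts to the identity $g_{46}^{2}=\sqrt{(6\sqrt{2}+11)/4}+\sqrt{(6\sqrt{2}+7)/4}$, which one checks by squaring both sides and applying $(3+\sqrt{2})^2(7+6\sqrt{2})=149+108\sqrt{2}$ to Berndt's tabulated form $g_{46}=\sqrt{(3+\sqrt{2}+\sqrt{7+6\sqrt{2}})/2}$. Each of the remaining nine entries requires its own such denesting, but the mechanism is uniform across the list and entirely parallel to the endgame of the proof of Theorem~\ref{tkhe4}; I would therefore present one representative case in detail (say $n=46$) and invoke ``the proof is analogous'' for the other nine, in the same style as Theorem~\ref{the4}.
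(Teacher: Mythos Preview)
Your argument is correct and lands on the same master identity as the paper: the paper obtains $\alpha_n=(G_n g_{4n})^{-8}$ by inserting \eqref{eqgn1} into \eqref{e87q1}, and via \eqref{identy} this is exactly your $\alpha_n=1/(4G_n^{16}g_n^{8})$. The difference is purely organizational. The paper substitutes the values of $g_{4n}$ from Theorem~\ref{the5}, and those were constructed so that each $g_{4n}$ has the same first three surd factors as the corresponding $G_n$ while the fourth factor is the conjugate surd (compare $g_{184}$ with $G_{46}$); thus $G_n g_{4n}$ collapses immediately and taking the $-8$th power just flips each unit factor to its conjugate. Your route through $g_n^{8}$ instead forces you to perform, case by case, the ``residual denesting'' such as $g_{46}^{2}=\sqrt{(6\sqrt{2}+11)/4}+\sqrt{(6\sqrt{2}+7)/4}$---but that denesting is precisely the content of \eqref{identy} applied to the known $G_n$ and $g_n$, i.e.\ it is the proof of Theorem~\ref{the5} in disguise. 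So your proof is sound; it simply re-does inside Theorem~\ref{the6} a simplification that the paper has already packaged one theorem earlier.
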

\begin{proof}
Employing \eqref{eqgn1} in \eqref{e87q1}, we obtain $\alpha_n = \left(G_ng_{4n}\right)^{-8}$. Applying theorems  \ref{tkhe4}-\ref{the5} in the identity, we obtain all the above values.
\end{proof}
\begin{theorem}\label{tbhe5} We have
\begin{eqnarray*}
a_{2, 2} &=& \frac{1}{2^{7/8}}\left(\sqrt{2}+1\right)^{1/2},\\
a_{7, 2} &=& \left(2\sqrt{2}+\sqrt{7}\right)^{1/4}\left(\frac{3+\sqrt{7}}{\sqrt{2}}\right)^{1/2}
\left(\sqrt{\frac{2\sqrt{2}+3}{4}}-\sqrt{\frac{2\sqrt{2}-1}{4}}\right)^{3},\\
a_{11, 2} &=& \left(\sqrt{2}-1\right)^3\left(3\sqrt{11}+7\sqrt{2}\right)^{1/2},\\
a_{15, 2} &=& \left(\sqrt{10}-3\right)\left(\sqrt{6}+\sqrt{5}\right)^{1/2}\left(\frac{\sqrt{5}+\sqrt{3}}{\sqrt{2}}\right)
\left(\frac{\sqrt{5}-1}{2}\right)^3,\\
a_{17, 2} &=&
\left(\sqrt{\frac{\sqrt{17}+5}{4}}+\sqrt{\frac{\sqrt{17}+1}{4}}\right)^{2}
\left(\sqrt{\frac{3\sqrt{17}+13}{8}}-\sqrt{\frac{3\sqrt{17}+5}{8}}\right)^{3},\\
a_{21, 2} &=& \left(\sqrt{2}+1\right)\left(2\sqrt{2}-\sqrt{7}\right)
\left(\frac{\sqrt{3}+1}{\sqrt{2}}\right)^2\left(\frac{\sqrt{7}-\sqrt{3}}{2}\right)^3,\\
a_{23, 2} &=& \left(78\sqrt{2}+23\sqrt{23}\right)^{1/4}\left(\frac{5+\sqrt{23}}{\sqrt{2}}\right)^{1/2}
\left(\sqrt{\frac{6\sqrt{2}+11}{4}}-\sqrt{\frac{6\sqrt{2}+7}{4}}\right)^{3},\\
a_{29, 2} &=& \left(\sqrt{2}+1\right)^3\left(\frac{\sqrt{29}-5}{2}\right)^3,\\
a_{35, 2} &=& \left(\sqrt{2}-1\right)^3\left(3\sqrt{14}+5\sqrt{5}\right)^{1/2}\left(\frac{3+\sqrt{7}}{\sqrt{2}}\right)
\left(\frac{\sqrt{5}-1}{2}\right)^6,\\
a_{39, 2} &=& \left(\sqrt{26}-5\right)\left(\sqrt{13}+2\sqrt{3}\right)\left(3\sqrt{3}+\sqrt{26}\right)^{1/2}
\left(\frac{\sqrt{13}-3}{2}\right)^3,\\
a_{41, 2} &=&
\left(\sqrt{\frac{\sqrt{41}+7}{2}}+\sqrt{\frac{\sqrt{41}+5}{2}}\right)^{2}
\left(\sqrt{\frac{\sqrt{41}+13}{8}}-\sqrt{\frac{\sqrt{41}+5}{8}}\right)^{6},\\
a_{51, 2} &=& \left(\sqrt{2}-1\right)^3\left(\sqrt{3}+\sqrt{2}\right)^2\left(3\sqrt{2}-\sqrt{17}\right)^2
\left(\sqrt{51}+5\sqrt{2}\right)^{1/2},\\
a_{65, 2} &=& \left(\sqrt{10}+3\right)\left(\sqrt{26}+5\right)
\left(\frac{\sqrt{13}-3}{2}\right)^3\left(\frac{\sqrt{5}-1}{2}\right)^9,\\
a_{71, 2} &=& \left(287\sqrt{71}+1710\sqrt{2}\right)^{1/4}\left(\frac{59+7\sqrt{71}}{\sqrt{2}}\right)^{1/2}
\left(\sqrt{\frac{90\sqrt{2}+131}{2}}-\sqrt{\frac{90\sqrt{2}+127}{2}}\right)^{3},
\end{eqnarray*}
\begin{eqnarray*}
a_{95, 2} &=& \left(\sqrt{10}-3\right)^3\left(2\sqrt{5}+\sqrt{19}\right)\left(37\sqrt{19}+51\sqrt{10}\right)^{1/2}
\left(\frac{\sqrt{5}-1}{2}\right)^9.
\end{eqnarray*}
\end{theorem}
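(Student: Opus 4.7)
The strategy is a direct application of Theorem \ref{the3}, which reduces every explicit value of $a_{m,2}$ to a computation involving the single number $g_{2m}^{12}+g_{2m}^{-12}$. Since $m$ is odd in every case listed, $2m$ is an even integer for which the class invariant $g_{2m}$ has been tabulated by Ramanujan and proved by Berndt et al.\ in \cite[pp.~200--203]{Berndt-notebook-5}. The proof therefore splits into the same list of fifteen independent calculations, each of which follows a common three-step template.

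The plan, for a fixed $m$, is as follows. First, I would fetch the known closed form of $g_{2m}$ from the table in Chapter~34 of \cite{Berndt-notebook-5}: for example, $g_{46}$ is the value already recorded in the proof of Theorem \ref{tkhe4}, $g_{142}$ is the one used in the $G_{142}$ calculation of Theorem \ref{the4}, and so on for $2m \in \{4,14,22,30,34,42,58,70,78,82,102,130,142,190\}$. Second, I would compute $T_m := (g_{2m}^{12}+g_{2m}^{-12})/2$; in each tabulated case $g_{2m}$ is a quartic surd over $\mathbb{Q}$, so $T_m$ is an element of a real quadratic field such as $\mathbb{Q}(\sqrt{2})$, $\mathbb{Q}(\sqrt{5})$, or $\mathbb{Q}(\sqrt{7})$. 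Third, I would substitute $T_m$ into
\[
a_{m,2} \;=\; \frac{1}{g_{2m}^{6}}\Bigl(\sqrt{T_m}+\sqrt{T_m-1}\Bigr)^{1/2}
\]
and simplify.

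The main obstacle, as in Theorem \ref{tkhe4}, is denesting the radicals. The quantity $\sqrt{T_m}+\sqrt{T_m-1}$ typically emerges as a square root of an expression of the form $u+v\sqrt{d}$ with $u,v\in\mathbb{Q}$, and to recover the clean presentation displayed in the statement one must extract inner square roots using the standard identity
\[
\sqrt{a\pm\sqrt{b}} \;=\; \sqrt{\tfrac{a+\sqrt{a^{2}-b}}{2}} \pm \sqrt{\tfrac{a-\sqrt{a^{2}-b}}{2}},
\]
possibly iterated, together with Lemma 9.10 of \cite[p.~292]{Berndt-notebook-5} (already invoked in the proof of Theorem \ref{tkhe4}) to handle cases where the discriminant itself involves a surd. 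In a few entries (namely $m=15,21,35,39,51,65,95$) an additional factorization appears because $2m$ has several odd prime divisors; there the simplified form naturally groups into the product of class invariants of the constituent primes, and one identifies the factors by matching radicals from the corresponding $g_{2m}$.

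With these denestings carried out, each value in the theorem drops out by elementary but tedious manipulation, so---following the convention adopted in the proof of Theorem \ref{the4}---I would write out one representative case in full (say $m=7$ or $m=23$, which already foreshadows the $G_{46}$ computation) and then state that the remaining fourteen identities are obtained \emph{mutatis mutandis}, omitting the routine algebra.
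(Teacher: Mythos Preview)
Your approach is exactly the paper's: apply Theorem \ref{the3} at each listed $m$ and insert the known value of $g_{2m}$ (the paper cites \cite[Theorem 4.1.2(i)]{Yi-Thesis-2000} and \cite[pp.~200--203]{Berndt-notebook-5}), then simplify. One small slip: you write ``$m$ is odd in every case listed,'' but $m=2$ is on the list, so $g_{4}$ must be sourced separately (e.g.\ from Yi's thesis) rather than from Ramanujan's Chapter~34 table.
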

\begin{proof}
We set $m = 2, 7, 11, 15, 17, 21, 23, 29, 35, 39, 41, 51, 65, 71,$ and $95$ in Theorem \ref{the3}, and use the corresponding values of $g_{2m}$
from \cite[ Theorem 4.1.2 (i)]{Yi-Thesis-2000}, and \cite[p. 200-203]{Berndt-notebook-5} to complete the proof.
\end{proof}

\end{document}